\newtheorem{definition}{Definition}
\newtheorem{proposition}[definition]{Proposition}
\newtheorem{lemma}[definition]{Lemma}
\newtheorem{theorem}[definition]{Theorem}
\newtheorem{corollary}[definition]{Corollary}
\newtheorem{conjecture}[definition]{Conjecture}
\newtheorem{remark}[definition]{Remark}
\newtheorem{example}[definition]{Example}
\newtheorem{question}[definition]{Question}
\def\bcj{\begin{conjecture}}
\def\ecj{\end{conjecture}}
\def\bcr{\begin{corollary}}
\def\ecr{\end{corollary}}
\def\bd{\begin{definition}}
\def\ed{\end{definition}}
\def\bea{\begin{eqnarray}}
\def\eea{\end{eqnarray}}
\def\bem{\begin{enumerate}}
\def\eem{\end{enumerate}}
\def\bex{\begin{example}}
\def\eex{\end{example}}
\def\bim{\begin{itemize}}
\def\eim{\end{itemize}}
\def\bl{\begin{lemma}}
\def\el{\end{lemma}}
\def\bpf{\begin{proof}}
\def\epf{\end{proof}}
\def\bpp{\begin{proposition}}
\def\epp{\end{proposition}}
\def\bqu{\begin{question}}
\def\equ{\end{question}}
\def\br{\begin{remark}}
\def\er{\end{remark}}
\def\bt{\begin{theorem}}
\def\et{\end{theorem}}
\def\btb{\begin{tabular}}
\def\etb{\end{tabular}}
\newcommand{\ket}[1]{|#1\rangle}
\newcommand{\norm}[1]{\lVert#1\rVert}
\newcommand{\abs}[1]{|#1|}
\newcommand{\Tr}{\mathrm{Tr}}
\begin{document}

\title[Isoclinic Subspaces and Quantum Error Correction]{Isoclinic Subspaces and Quantum Error Correction}

\author[D.~W. Kribs, D. Mammarella, R. Pereira]{David~W.~Kribs$^{1,2}$, David Mammarella$^{1}$, Rajesh Pereira$^{1}$}

\address{$^1$Department of Mathematics \& Statistics, University of Guelph, Guelph, ON, Canada N1G 2W1}
\address{$^2$Institute for Quantum Computing, University of Waterloo, Waterloo, ON, Canada N2L 3G1}

\subjclass[2010]{15B99, 46C05, 47A12, 81P45, 94A40}

\keywords{canonical angles, isoclinic subspaces, quantum error correcting codes, higher rank numerical ranges.}


\begin{abstract}
We exhibit equivalent conditions for subspaces of an inner product space to be isoclinic, including a characterization based on the classical notion of canonical angles. We identify a connection with quantum error correction, showing that every quantum error correcting code is associated with a family of isoclinic subspaces, and we prove a converse for pairs of such subspaces. We also show how the canonical angles for isoclinic subspaces arise in the structure of the higher rank numerical ranges of the corresponding orthogonal projections. 
\end{abstract}

\maketitle

\section{Introduction}

The classical notions of canonical angles and isoclinic subspaces have played a role in Euclidean geometry, and in matrix and operator theory and beyond for over a century \cite{jordan1875essai,balla2019equiangular, bjorck1973numerical, hoggar1977new, wong1960clifford, wong1977linear}. On the other hand, quantum information theory is relatively new, with roots going back several decades but only emerging as a formal field of study over the past quarter century or so \cite{nielsen2002quantum}. Quantum error correction is a fundamental subfield with aspects touching on all parts of quantum information, from theory to experiment \cite{shor1995pw,steane1996error,gottesman1996d,bennett1996ch,knill2000theory,kribs2005quantum}.

In this paper, we bring together equivalent conditions for isoclinic subspaces, including a new description based on canonical angles. We establish connections with the theory of quantum error correction, showing how quantum error correcting codes are associated with families of isoclinic subspaces. We also show how higher rank numerical ranges of matrices, originally introduced for quantum error correction purposes \cite{choi2005quantum,choi2006higher2,li2007higher,woerdeman2008higher,martinez2008higher,choi2008geometry,li2008canonical,li2009condition,li2011generalized,GLPS}, arise in the study of isoclinic subspaces.

The paper is organized as follows. The next section includes a review of the classical notions of canonical angles and isoclinic subspaces, and we give equivalent conditions for families of subspaces to be isoclinic. In the following section we show how every quantum error correcting code and error model determines a family of isoclinic subspaces and we prove a converse for pairs of such subspaces. In the final section we show how the canonical angles for isoclinic subspaces are embedded in the structure of the higher rank numerical ranges for the corresponding orthogonal projections. We also include a pair of illustrative examples.

\section{Canonical Angles and Isoclinic Subspaces}

We first introduce the classical notion of canonical angles between pairs of subspaces. These are sometimes referred to as principal angles and were first formulated by Jordan \cite{jordan1875essai}.

\begin{definition}
{\rm
Let $\mathcal V$ and $\mathcal W$ be finite dimensional subspaces of a Hilbert space $\mathcal{H}$ and let $l = \min\{\dim(\mathcal V), \dim(\mathcal W)\}.$ Then the {\it canonical angles} $\{ \theta_1, \ldots , \theta_l  \}$ between $\mathcal V$ and $\mathcal W$ are defined as follows: the first canonical angle is the unique number $\theta_1 \in [0,\frac{\pi}{2}]$ such that
\[
\cos(\theta_1) = \max \{\abs{\langle x, y \rangle} : x \in\mathcal V, y\in\mathcal W, \norm{x} = \norm{y} = 1\}.
\]
Let $x_1$ and $y_1$ be unit vectors in $\mathcal V$ and $\mathcal W$ for which the previous maximum is attained. Then we define the second canonical angle as the unique number $\theta_2 \in [0,\frac{\pi}{2}]$ such that
\[
\cos(\theta_2) = \max \{\abs{\langle x, y \rangle} : x \in\mathcal V\cap \{x_1\}^\perp, y \in\mathcal W \cap \{ y_1\}^\perp, \norm{x} = \norm{y} = 1 \}.
\]
For each $k \leq l$, similarly choose unit vectors $x_2, \ldots x_{k-1}$ and $y_2, \ldots y_{k-1}$ in $\mathcal V$ and $\mathcal W$ respectively, in each case where the previous maximum is attained. Then  $\theta_k$ is taken to be the unique number such that
$\cos(\theta_k)$ is equal to the maximum of $\abs{\langle x, y \rangle}$ with unit vectors $x \in\mathcal V \cap \{ x_1,\ldots , x_{k-1} \}^\perp$ and $y \in \mathcal W \cap \{ y_1,\ldots , y_{k-1} \}^\perp$.
}
\end{definition}

Following from this definition, Bjorck and Golub \cite{bjorck1973numerical} showed that the canonical angles can be characterized in terms of the singular values of the product of two matrices that encode their respective subspace.

\begin{theorem}\cite{bjorck1973numerical} \label{bjorck}
Let $\mathcal V$ and $\mathcal W$ be subspaces of a Hilbert Space $\mathcal{H}$ with dimensions $m$, $n$ and $d$ respectively. Let $Q_{\mathcal V}$ and $Q_{\mathcal W}$ be respectively $d \times m$ and $d \times n$ matrices whose column vectors are the elements of orthonormal bases of $\mathcal V$ and $\mathcal W$ respectively represented in any orthonormal basis for $\mathcal H$.  Then the cosines of the canonical angles $\theta_k$ between the subspaces are the singular values of the $m \times n$ matrix $Q_{\mathcal V}^* Q_{\mathcal W}$, symbolically denoted by: $$cos(\theta_k) = \sigma^{\downarrow}_k (Q_{\mathcal V}^{*} Q_{\mathcal W}),$$ for all $k = 1, ..., l = \min\{m, n\}$, where $\sigma^{\downarrow}_k$ denotes the $k$th singular values of the matrix $Q^*_{\mathcal{V}} Q_{\mathcal{W}}$ listed in decreasing order. 
\end{theorem}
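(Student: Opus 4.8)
The plan is to transport the entire recursive maximization defining the canonical angles from the subspaces $\mathcal V$ and $\mathcal W$ into coordinate vectors acted on by the matrix $A := Q_{\mathcal V}^* Q_{\mathcal W}$, and then to recognize the resulting coordinate recursion as the standard extremal characterization of the singular values of $A$. The first step is a change of variables. Since the columns of $Q_{\mathcal V}$ form an orthonormal basis of $\mathcal V$, every $x \in \mathcal V$ is uniquely of the form $x = Q_{\mathcal V} a$ with $a \in \mathbb{C}^m$, and $\norm{x} = \norm{a}$ because $Q_{\mathcal V}^* Q_{\mathcal V} = I_m$; likewise each $y \in \mathcal W$ is $y = Q_{\mathcal W} b$ with $\norm{y} = \norm{b}$. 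The crucial identity is then
\[
\langle x, y \rangle = (Q_{\mathcal V} a)^* (Q_{\mathcal W} b) = a^* Q_{\mathcal V}^* Q_{\mathcal W} b = a^* A b,
\]
so that maximizing $\abs{\langle x, y \rangle}$ over unit vectors in $\mathcal V$ and $\mathcal W$ is exactly the same as maximizing $\abs{a^* A b}$ over unit vectors $a \in \mathbb{C}^m$ and $b \in \mathbb{C}^n$.

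Next I would verify that the orthogonality constraints in the definition pass cleanly through this correspondence. If $x_j = Q_{\mathcal V} a_j$, then for $x = Q_{\mathcal V} a$ we have $\langle x, x_j \rangle = a^* Q_{\mathcal V}^* Q_{\mathcal V} a_j = a^* a_j$, so $x \perp x_j$ if and only if $a \perp a_j$, and similarly $y \perp y_j$ if and only if $b \perp b_j$. Consequently the $k$-th step of the definition, namely maximizing $\abs{\langle x, y\rangle}$ over unit $x \in \mathcal V \cap \{x_1, \dots, x_{k-1}\}^\perp$ and $y \in \mathcal W \cap \{y_1, \dots, y_{k-1}\}^\perp$, is identical to maximizing $\abs{a^* A b}$ over unit vectors $a$ and $b$ orthogonal to the previously selected $a_1, \dots, a_{k-1}$ and $b_1, \dots, b_{k-1}$.

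It then remains to identify this coordinate recursion with the singular values of $A$. Here I would invoke the singular value decomposition $A = U \Sigma V^*$, with $U, V$ unitary and $\Sigma$ carrying the singular values $\sigma_1^\downarrow \geq \sigma_2^\downarrow \geq \cdots$ on its diagonal. Writing $a^* A b = \sum_k \sigma_k^\downarrow (a^* u_k)(v_k^* b)$ in terms of the left and right singular vectors $u_k, v_k$, a Cauchy--Schwarz estimate shows $\abs{a^* A b} \leq \sigma_1^\downarrow$ for all unit $a, b$, with equality at $a = u_1$, $b = v_1$; hence $\cos\theta_1 = \sigma_1^\downarrow(A)$. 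The induction step is that restricting $a, b$ to the orthogonal complements of the first $k-1$ selected maximizers deflates the problem to the same extremal problem for the trailing singular values, giving $\cos\theta_k = \sigma_k^\downarrow(A)$.

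The step I expect to be the main obstacle is the inductive deflation in the presence of repeated singular values. When $\sigma_k^\downarrow$ has multiplicity greater than one, the maximizing unit vectors at a given step are not unique: any orthonormal pair drawn from the corresponding singular subspaces will attain the maximum. I would therefore need to argue that whatever admissible maximizers $a_1, \dots, a_{k-1}$ and $b_1, \dots, b_{k-1}$ the definition selects, the span they remove from $\mathbb{C}^m$ and $\mathbb{C}^n$ is precisely the top part of the singular value filtration, so that the maximum over the orthogonal complement is forced to equal the next singular value $\sigma_k^\downarrow$ regardless of the choices. This amounts to a careful bookkeeping of which singular directions are exhausted at each stage, and it is where the proof must be written with the most care to ensure the sequence $\{\cos\theta_k\}$ is well defined and matches $\{\sigma_k^\downarrow(A)\}$ independently of the selections made in the recursion.
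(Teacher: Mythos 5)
First, note that the paper does not prove this theorem at all: it is quoted directly from Bjorck and Golub \cite{bjorck1973numerical}, so there is no internal proof to compare against and your argument has to stand on its own. Your reduction to coordinates is correct and is the standard route: since $Q_{\mathcal V}$ and $Q_{\mathcal W}$ are isometries ($Q_{\mathcal V}^*Q_{\mathcal V} = I_m$, $Q_{\mathcal W}^*Q_{\mathcal W} = I_n$), the correspondence $x = Q_{\mathcal V}a$, $y = Q_{\mathcal W}b$ preserves norms, inner products, and the orthogonality constraints, so the recursion defining the canonical angles becomes the recursive extremal problem for $\abs{a^* A b}$ with $A = Q_{\mathcal V}^* Q_{\mathcal W}$, and the base case $\cos\theta_1 = \sigma_1^{\downarrow}(A)$ follows from Cauchy--Schwarz exactly as you say.

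The gap is the induction step, which you flag but do not close, and the repair you sketch (bookkeeping of ``the top part of the singular value filtration'') is not the right mechanism: when singular values repeat there is no canonical filtration to track, and the maximizers chosen by the definition genuinely need not be the columns of your fixed SVD. The clean way to finish is a deflation lemma that makes all such choices irrelevant. If $a_1, b_1$ are unit vectors with $\abs{a_1^* A b_1} = \sigma_1^{\downarrow}(A)$, then the chain $\sigma_1^{\downarrow}(A) = \abs{a_1^*(Ab_1)} \leq \norm{Ab_1} \leq \sigma_1^{\downarrow}(A)$ forces equality in Cauchy--Schwarz, hence $A b_1 = \lambda a_1$ with $\abs{\lambda} = \sigma_1^{\downarrow}(A)$, and the same argument applied to $A^*$ gives $A^* a_1 = \overline{\lambda}\, b_1$; that is, \emph{every} maximizing pair is a singular pair, not just the one supplied by the SVD. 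Consequently $a_1^* A b = (A^*a_1)^*b = 0$ for all $b \perp b_1$, and $a^* A b_1 = \lambda\, a^* a_1 = 0$ for all $a \perp a_1$, so in orthonormal bases extending $a_1$ and $b_1$ the matrix $A$ is block diagonal, $A \cong \lambda \oplus A_2$, where $A_2$ is the compression of $A$ to the pair of complements $a_1^\perp$, $b_1^\perp$. The multiset of singular values of $A_2$ is therefore that of $A$ with one copy of $\sigma_1^{\downarrow}$ removed, and induction applied to $A_2$ yields $\cos\theta_k = \sigma_k^{\downarrow}(A)$ for every admissible sequence of maximizers, with no analysis of multiplicities needed; this simultaneously settles your well-definedness worry. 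With that lemma inserted, your outline becomes a complete and correct proof of the cited theorem.
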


We can view the matrix $Q_{\mathcal V}$ in operator theoretic terms as well.  If $\mathcal{V}$ is an $m$-dimensional subspace of $\mathcal{H}$, then $Q_{\mathcal V}$ is an isometry from $\mathbb{C}^m$ into $\mathcal H$ with range equal to $\mathcal V$.  A consequence of this is that $Q_{\mathcal{V}} Q_{\mathcal{V}}^*$ is a matrix representation of the orthogonal projection from $\mathcal H$ onto $\mathcal V$, whereas on the other hand $Q_{\mathcal V}^* Q_{\mathcal V} = I_m$.

\begin{definition}
{\rm
Let $\mathcal V$ and $\mathcal W$ be two $m$-dimensional subspaces of a Hilbert space $\mathcal{H}$, where $1\le m\le \dim(\mathcal{H})$. Then $\mathcal V$ and $\mathcal W$ are said to be {\it isoclinic} if all $m$ canonical angles between $\mathcal V$ and $\mathcal W$ are equal. If that angle is $\theta$, then the subspaces are said to be {\it isoclinic at angle $\theta$}.  A collection of $m$-dimensional subspaces of a Hilbert space are said to be isoclinic if all pairs of distinct subspaces from the collection are pairwise isoclinic.
}
\end{definition}

Of course, any family of mutually orthogonal subspaces are isoclinic at angle $\frac{\pi}{2}$, but there are other possibilities as well. There are a variety of useful equivalent characterizations of isoclinic subspaces, as shown in the following result.

\begin{theorem}\label{isoprop}
Let $\mathcal V$ and $\mathcal W$ be two $m$-dimensional subspaces of a Hilbert space $\mathcal{H}$, with $m \geq 1$ and $d = \dim \mathcal H$. Let $P_{\mathcal{V}}$ and $P_{\mathcal{W}}$ denote the orthogonal projections onto the subspaces $\mathcal{V}$ and $\mathcal{W}$ respectively. Let $Q_{\mathcal{V}}$ and $Q_{\mathcal{W}}$ be $d \times m$ matrices whose column vectors are elements of the orthonormal bases of the subspaces $\mathcal{V}$ and $\mathcal{W}$ respectively, represented in any orthonormal basis for $\mathcal H$. Then the following conditions are equivalent:
\begin{itemize}
\item[$(i)$] $\mathcal V$ and $\mathcal W$ are isoclinic subspaces.
\item[$(ii)$]  $Q_{\mathcal{V}}^* Q_{\mathcal{W}}$ is a scalar multiple of a unitary on $\mathbb{C}^m$.
\item[$(iii)$] There exists $\lambda \geq 0$ such that
\begin{equation}\label{isoconds}
P_{\mathcal V}P_{\mathcal W}P_{\mathcal V} = \lambda P_{\mathcal V}   \quad \mathrm{and} \quad  P_{\mathcal W}P_{\mathcal V}P_{\mathcal W} = \lambda P_{\mathcal W} .
\end{equation}
Here, $\lambda = \cos(\theta)$ where $\mathcal V$, $\mathcal W$ are isoclinic at angle $\theta$.
\item[$(iv)$] The angle between any non-zero vector in $\mathcal{V}$ and its projection on $\mathcal{W}$ is constant; in other words,
$\Vert P_{\mathcal W} x\Vert \Vert x\Vert^{-1}$ is constant for $0\neq x \in \mathcal{V}$. And the same holds true with the roles of $\mathcal V, \mathcal W$ reversed.
\end{itemize}
\end{theorem}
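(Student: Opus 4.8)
The plan is to prove the four conditions equivalent through the three biconditionals $(i)\Leftrightarrow(ii)$, $(ii)\Leftrightarrow(iii)$, and $(iii)\Leftrightarrow(iv)$, using Theorem~\ref{bjorck} together with the operator identities $P_{\mathcal V}=Q_{\mathcal V}Q_{\mathcal V}^*$, $P_{\mathcal W}=Q_{\mathcal W}Q_{\mathcal W}^*$, and $Q_{\mathcal V}^*Q_{\mathcal V}=Q_{\mathcal W}^*Q_{\mathcal W}=I_m$. Throughout I abbreviate $A=Q_{\mathcal V}^*Q_{\mathcal W}$, an $m\times m$ matrix since both subspaces have dimension $m$, and note $A^*=Q_{\mathcal W}^*Q_{\mathcal V}$.

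For $(i)\Leftrightarrow(ii)$, Theorem~\ref{bjorck} identifies the cosines of the $m$ canonical angles with the singular values of $A$, and the subspaces are isoclinic exactly when these $m$ angles coincide, i.e.\ when all singular values of $A$ equal a common value $c=\cos\theta\ge 0$. I will then invoke the elementary fact that a square matrix has all singular values equal to $c$ if and only if it is $c$ times a unitary: writing a singular value decomposition $A=U\Sigma V^*$ with $\Sigma=cI_m$ gives $A=c(UV^*)$, and the converse is immediate since $|c|=c$ here. This yields $(i)\Leftrightarrow(ii)$ at once.

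For $(ii)\Leftrightarrow(iii)$, if $A=cU$ with $U$ unitary then $AA^*=A^*A=|c|^2I_m$, and substituting into
\[
P_{\mathcal V}P_{\mathcal W}P_{\mathcal V}=Q_{\mathcal V}(Q_{\mathcal V}^*Q_{\mathcal W})(Q_{\mathcal W}^*Q_{\mathcal V})Q_{\mathcal V}^*=Q_{\mathcal V}\,AA^*\,Q_{\mathcal V}^*,
\]
with the symmetric computation $P_{\mathcal W}P_{\mathcal V}P_{\mathcal W}=Q_{\mathcal W}A^*AQ_{\mathcal W}^*$, produces \eqref{isoconds} with common value $\lambda=|c|^2=\cos^2\theta$. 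Conversely, given \eqref{isoconds} I multiply the first identity on the left by $Q_{\mathcal V}^*$ and on the right by $Q_{\mathcal V}$ and use $Q_{\mathcal V}^*Q_{\mathcal V}=I_m$ to extract $AA^*=\lambda I_m$; since $A$ is square this forces every singular value of $A$ to equal $\sqrt{\lambda}$, which is $(ii)$. I will also observe that, because $A$ is square, $AA^*=\lambda I_m$ already entails $A^*A=\lambda I_m$, so in the equal-dimensional setting either identity in \eqref{isoconds} implies the other.

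For $(iii)\Leftrightarrow(iv)$, I use self-adjointness and idempotency of $P_{\mathcal W}$ and the relation $x=P_{\mathcal V}x$ for $x\in\mathcal V$ to write, for $0\neq x\in\mathcal V$,
\[
\frac{\norm{P_{\mathcal W}x}^2}{\norm{x}^2}=\frac{\langle P_{\mathcal V}P_{\mathcal W}P_{\mathcal V}\,x,\,x\rangle}{\langle x,\,x\rangle},
\]
exhibiting the ratio of $(iv)$ as the Rayleigh quotient of the positive semidefinite operator $B=P_{\mathcal V}P_{\mathcal W}P_{\mathcal V}$ restricted to $\mathcal V$. By the spectral theorem this quotient is constant on $\mathcal V\setminus\{0\}$ precisely when $B|_{\mathcal V}$ is a scalar multiple of the identity on $\mathcal V$; since $B$ maps into $\mathcal V$ and annihilates $\mathcal V^\perp$, that is exactly the first identity in \eqref{isoconds}, and the reversed-roles statement handles the second. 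The most delicate point is this last passage: I must argue that mere constancy of $\norm{P_{\mathcal W}x}\norm{x}^{-1}$ forces scalarity of $B|_{\mathcal V}$ rather than just confinement to the spectral interval, and here the spectral theorem is essential, a self-adjoint operator having constant Rayleigh quotient exactly when all of its eigenvalues coincide. Everything else reduces to bookkeeping with the three isometry/projection identities.
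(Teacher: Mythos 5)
Your proposal is correct and follows essentially the same route as the paper's proof: $(i)\Leftrightarrow(ii)$ via Theorem~\ref{bjorck}, $(ii)\Leftrightarrow(iii)$ by sandwiching the projection identities with the isometries $Q_{\mathcal V}, Q_{\mathcal W}$ and using $Q_{\mathcal V}^*Q_{\mathcal V}=Q_{\mathcal W}^*Q_{\mathcal W}=I_m$, and $(iii)\Leftrightarrow(iv)$ via the Rayleigh-quotient computation $\norm{P_{\mathcal W}x}^2=\langle P_{\mathcal V}P_{\mathcal W}P_{\mathcal V}x,x\rangle$ for $x\in\mathcal V$. Two points where you add value: your spectral-theorem argument for $(iv)\Rightarrow(iii)$ fills in a step the paper only sketches (and your observation that for square $A=Q_{\mathcal V}^*Q_{\mathcal W}$ the identity $AA^*=\lambda I_m$ forces $A^*A=\lambda I_m$ quietly resolves the issue of the two constants in $(iv)$ agreeing); and your value $\lambda=|c|^2=\cos^2\theta$ is the correct one, so the parenthetical ``$\lambda=\cos(\theta)$'' in the theorem statement is off by a square, exactly as your computation $AA^*=\cos^2(\theta)\,I_m$ shows.
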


\begin{proof}
The equivalence of $(i)$ and $(ii)$ follows from Theorem \ref{bjorck} above, as all the singular values of a unitary matrix are equal to one.

For $(ii) \implies (iii)$, assume $Q_{\mathcal{V}}^* Q_{\mathcal{W}}$ is a multiple of a unitary on $\mathbb{C}^m$. Then the same is true of $Q_{\mathcal W}^* Q_{\mathcal V} = (Q_{\mathcal{V}}^* Q_{\mathcal{W}})^*$.  Recall from the discussion just after Theorem~\ref{bjorck}, the projections onto subspace $\mathcal{V}$ and $\mathcal{W}$ respectively have matrix representations $P_{\mathcal{V}} = Q_{\mathcal{V}} Q_{\mathcal{V}}^*$ and $P_{\mathcal{W}} = Q_{\mathcal{W}} Q_{\mathcal{W}}^*.$
Since $Q_{\mathcal{V}}^* Q_{\mathcal{W}}$ is a multiple of a unitary on $\mathbb{C}^m$, we have for some $0\leq \lambda \leq 1$,  $Q_{\mathcal{V}}^* Q_{\mathcal{W}} Q_{\mathcal{W}}^* Q_{\mathcal{V}} = \lambda I_m$. Hence,
$$P_{\mathcal{V}} P_{\mathcal{W}} P_{\mathcal{V}} = Q_{\mathcal{V}} Q_{\mathcal{V}}^* Q_{\mathcal{W}} Q_{\mathcal{W}}^* Q_{\mathcal{V}} Q_{\mathcal{V}}^*
= Q_{\mathcal{V}} (\lambda I) Q_{\mathcal{V}}^*
= \lambda Q_{\mathcal{V}} Q_{\mathcal{V}}^*
= \lambda P_{\mathcal{V}}.$$
This is similarly done for $P_{\mathcal W}P_{\mathcal V}P_{\mathcal W} = \lambda P_{\mathcal W}.$ (Note that the $\lambda$ obtained for $Q_{\mathcal W}^* Q_{\mathcal V}$ is the same as that for $Q_{\mathcal V}^* Q_{\mathcal W} = (Q_{\mathcal W}^* Q_{\mathcal V})^*$.)

For $(iii) \implies (ii)$, assume there exists a scalar $\lambda$ such that $P_{\mathcal{V}} P_{\mathcal{W}} P_{\mathcal{V}} = \lambda P_{\mathcal{V}}$ and $P_{\mathcal{W}} P_{\mathcal{V}} P_{\mathcal{W}} = \lambda P_{\mathcal{W}}$ (necessarily $0\leq \lambda \leq 1$). Recall  $Q_{\mathcal{V}}^* Q_{\mathcal{V}} = I_m = Q_{\mathcal{W}}^*Q_{\mathcal{W}}$. Together this implies that:
\begin{align*}
P_{\mathcal{V}} P_{\mathcal{W}} P_{\mathcal{V}} =& \lambda P_{\mathcal{V}} \\
Q_{\mathcal{V}} Q_{\mathcal{V}}^* Q_{\mathcal{W}} Q_{\mathcal{W}}^* Q_{\mathcal{V}} Q_{\mathcal{V}}^* =& \lambda Q_{\mathcal{V}} Q_{\mathcal{V}}^* \\
(Q_{\mathcal{V}}^*) Q_{\mathcal{V}} Q_{\mathcal{V}}^* Q_{\mathcal{W}} Q_{\mathcal{W}}^* Q_{\mathcal{V}} Q_{\mathcal{V}}^* (Q_{\mathcal{V}}) =& (Q_{\mathcal{V}}^*) \lambda Q_{\mathcal{V}} Q_{\mathcal{V}}^* (Q_{\mathcal{V}})\\
(I) Q_{\mathcal{V}}^* Q_{\mathcal{W}} Q_{\mathcal{W}}^* Q_{\mathcal{V}} (I) =& \lambda (I) (I) \\
(Q_{\mathcal{W}}^* Q_{\mathcal{V}})^* Q_{\mathcal{W}}^* Q_{\mathcal{V}} =& \lambda I .
\end{align*}
Thus, $Q_{\mathcal{W}}^* Q_{\mathcal{V}}$ is a multiple of a unitary on $\mathbb{C}^m$. This is similarly true for $P_{\mathcal{W}} P_{\mathcal{V}} P_{\mathcal{W}} = \lambda P_{\mathcal{W}}$ and $Q_{\mathcal V}^* Q_{\mathcal W}$.

To see $(iii) \implies (iv)$, assume there exists $0\leq \lambda \leq 1$ such that $P_{\mathcal{V}} P_{\mathcal{W}} P_{\mathcal{V}} = \lambda P_{\mathcal{V}}$ and $P_{\mathcal{W}} P_{\mathcal{V}} P_{\mathcal{W}} = \lambda P_{\mathcal{W}}$. Let $0 \neq x = P_{\mathcal V} x \in \mathcal V$. Then as $P_{\mathcal{V}} P_{\mathcal{W}} P_{\mathcal{V}} = \lambda P_{\mathcal{V}}$, we have,
\[
\lambda \Vert x\Vert^2 = \lambda \langle P_{\mathcal{V}} x, x \rangle
= \langle P_{\mathcal{V}} P_{\mathcal{W}} P_{\mathcal{V}} x, x \rangle
=  \langle P_{\mathcal{W}} x, x \rangle
= \Vert P_{\mathcal W} x  \Vert^2 .
\]
Thus, $\sqrt{\lambda} = \Vert P_{\mathcal W} x\Vert \Vert x\Vert^{-1}$ for all $0 \neq x \in \mathcal V$. Similarly, from $P_{\mathcal{W}} P_{\mathcal{V}} P_{\mathcal{W}} = \lambda P_{\mathcal{W}}$, we obtain $\sqrt{\lambda} = \Vert P_{\mathcal V} x\Vert \Vert x\Vert^{-1}$ for all $0 \neq x \in \mathcal W$.

Finally for $(iv) \implies (iii)$, if $r = \Vert P_{\mathcal W} x\Vert \Vert x\Vert^{-1}$ for all $0 \neq x \in \mathcal V$, then one can follow a similar argument to that above to show $r^2 P_{\mathcal V} =  P_{\mathcal{V}} P_{\mathcal{W}} P_{\mathcal{V}}$.
\end{proof}

\begin{remark}
{\rm
We note that condition $(iv)$ was taken as the definition of isoclinic subspaces in \cite{hoggar1977new,wong1977linear}, with the equivalence of $(iii)$ and $(iv)$ being noted without proof in \cite{hoggar1977new}. The connection with canonical angles given by the equivalence of $(ii)$ and $(iii)$ appears to be new.
}
\end{remark}

\section{Connection with Quantum Error Correction}

Error models in quantum information are described by sets of operators $\{ E_i \}$ on a Hilbert space $\mathcal H$ associated with a given quantum system. In general the operators satisfy the condition $\sum_i E_i^* E_i \leq I$, which ensures the completely positive map (called a quantum channel in this context) given by $\mathcal E(\rho) = \sum_i E_i \rho E_i^*$ is a trace non-increasing map. Quantum codes are identified with subspaces $\mathcal C$ of $\mathcal H$, and the code is {\it correctable for $\mathcal E$} if there is another quantum channel $\mathcal R$ on $\mathcal H$ such that $(\mathcal R \circ \mathcal E)(\rho) = \rho$ for all density operators $\rho$ supported on $\mathcal C$.

The theory of quantum error correction grew out of seminal examples and key early results \cite{shor1995pw,steane1996error,gottesman1996d,bennett1996ch,knill2000theory}; in particular, the famous Knill-Laflamme theorem \cite{knill1997knill} is a bedrock of quantum error correction. It frames correctability of a code strictly in terms of properties of the error operators restricted to the code subspace as follows: $\mathcal C$ is correctable for $\mathcal E$ if and only if there exist scalars $\alpha_{ij}\in \mathbb{C}$ such that for all $i,j$,
\begin{equation}\label{qecconds}
P_{\mathcal C} E_i^* E_j P_{\mathcal C} = \alpha_{ij} P_{\mathcal C} ,
\end{equation}
where $P_{\mathcal C}$ is the projection of $\mathcal H$ onto $\mathcal C$. Observe that the scalars $\alpha = ( \alpha_{ij} )$ form a positive matrix.

We establish a correspondence between isoclinic subspaces and quantum error correcting codes in the following result.  Without loss of generality we will assume the code is non-degenerate in the sense that the set of restricted error operators $\{ E_i|_{\mathcal C} \}$ is minimal in size. Also, recall that an operator $U$ on a Hilbert space is a partial isometry if $U^*U$ and $UU^*$ are orthogonal projections, respectively called its initial and final projections.

\begin{theorem}\label{isoqec}
Suppose $\mathcal C$ is a subspace of a Hilbert space $\mathcal H$ that is correctable for a non-degenerate error model $\{E_i\}$. For each $i$, let $\mathcal V_i =\mathrm{Range}\, (E_i|_{\mathcal C} )$ be the range subspace of the restriction of $E_i$ to $\mathcal C$. Then $\{\mathcal V_i \}$ is a set of isoclinic subspaces of $\mathcal{H}$.
\end{theorem}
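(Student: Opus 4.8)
The plan is to derive everything from the Knill--Laflamme conditions \eqref{qecconds} and then invoke the equivalence of $(i)$ and $(ii)$ in Theorem~\ref{isoprop}. The first step is to confirm that the subspaces $\mathcal V_i$ all share a common dimension, so that isoclinicity is even meaningful. Setting $i = j$ in \eqref{qecconds} yields $P_{\mathcal C} E_i^* E_i P_{\mathcal C} = \alpha_{ii} P_{\mathcal C}$, so that $\Vert E_i x\Vert^2 = \alpha_{ii}\Vert x\Vert^2$ for every $x \in \mathcal C$. Non-degeneracy of the error model guarantees $\alpha_{ii} > 0$, since $\alpha_{ii} = 0$ would force $E_i|_{\mathcal C} = 0$ and the operator could be deleted from the minimal set. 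Hence $E_i|_{\mathcal C}$ is $\sqrt{\alpha_{ii}}$ times an isometry from $\mathcal C$ into $\mathcal H$, and in particular $\dim \mathcal V_i = \dim \mathcal C =: m$ for every $i$.

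Next I would produce explicit isometries onto the $\mathcal V_i$. Let $Q_{\mathcal C}$ be a $d \times m$ isometry with range $\mathcal C$, so that $Q_{\mathcal C}^* Q_{\mathcal C} = I_m$ and $Q_{\mathcal C} Q_{\mathcal C}^* = P_{\mathcal C}$. From the previous step, $Q_{\mathcal V_i} := \alpha_{ii}^{-1/2} E_i Q_{\mathcal C}$ is a $d \times m$ isometry whose columns form an orthonormal basis of $\mathcal V_i$; indeed $Q_{\mathcal V_i}^* Q_{\mathcal V_i} = \alpha_{ii}^{-1} Q_{\mathcal C}^* E_i^* E_i Q_{\mathcal C} = I_m$.

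The heart of the argument is then a short computation of the off-diagonal overlaps. Using the identities $Q_{\mathcal C}^* P_{\mathcal C} = Q_{\mathcal C}^*$ and $P_{\mathcal C} Q_{\mathcal C} = Q_{\mathcal C}$ together with \eqref{qecconds}, I would obtain
\[
Q_{\mathcal V_i}^* Q_{\mathcal V_j} = \frac{1}{\sqrt{\alpha_{ii}\alpha_{jj}}}\, Q_{\mathcal C}^* E_i^* E_j Q_{\mathcal C} = \frac{\alpha_{ij}}{\sqrt{\alpha_{ii}\alpha_{jj}}}\, I_m .
\]
Thus $Q_{\mathcal V_i}^* Q_{\mathcal V_j}$ is a scalar multiple of the identity, and \emph{a fortiori} a scalar multiple of a unitary on $\mathbb C^m$. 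By the equivalence of $(i)$ and $(ii)$ in Theorem~\ref{isoprop}, each pair $\mathcal V_i, \mathcal V_j$ is isoclinic, whence $\{\mathcal V_i\}$ is a family of isoclinic subspaces.

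I expect the only genuine subtlety to lie in the bookkeeping of the first step: ensuring the dimensions actually coincide, and correctly invoking non-degeneracy to exclude $\alpha_{ii} = 0$ so that the normalizing factors $\alpha_{ii}^{-1/2}$ are legitimate. Once the isometries $Q_{\mathcal V_i}$ are in hand, the remainder is a routine manipulation of the Knill--Laflamme relations, with positivity of the matrix $\alpha = (\alpha_{ij})$ guaranteeing the overlap scalars are well defined.
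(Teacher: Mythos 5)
Your proposal is correct. Every step checks out: non-degeneracy rules out $\alpha_{ii}=0$, the operator $Q_{\mathcal V_i} = \alpha_{ii}^{-1/2} E_i Q_{\mathcal C}$ is indeed a $d\times m$ isometry with range $\mathcal V_i$ (so the dimensions all equal $\dim\mathcal C$, a point the paper leaves implicit), and the overlap computation $Q_{\mathcal V_i}^* Q_{\mathcal V_j} = \alpha_{ij}(\alpha_{ii}\alpha_{jj})^{-1/2} I_m$ follows immediately from $Q_{\mathcal C} = P_{\mathcal C}Q_{\mathcal C}$ and the Knill--Laflamme relations, covering the orthogonal case $\alpha_{ij}=0$ as well since $0$ is a legitimate scalar multiple of a unitary.

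Your route differs from the paper's in which branch of Theorem~\ref{isoprop} it targets. The paper forms the polar decomposition $E_i P_{\mathcal C} = \sqrt{\alpha_{ii}}\, U_i P_{\mathcal C}$, works with the $d\times d$ partial isometries $U_i$ and the projections $P_i = U_i P_{\mathcal C} U_i^*$, and then verifies condition $(iii)$ through the two-step product computation $P_i P_j P_i = |\lambda_{ij}|^2 P_i$. You instead build the rectangular isometries $Q_{\mathcal V_i}$ directly (no polar decomposition needed, since the restricted errors are already scalar multiples of isometries) and verify condition $(ii)$. The two normalizations are the same in disguise ($U_i P_{\mathcal C} = \alpha_{ii}^{-1/2} E_i P_{\mathcal C}$ versus $Q_{\mathcal V_i} = \alpha_{ii}^{-1/2} E_i Q_{\mathcal C}$), but your endpoint is reached in one line rather than via the projection algebra, and it yields slightly more: the overlap is a scalar multiple of the \emph{identity}, so by Theorem~\ref{bjorck} you read off the common canonical angle as $\cos\theta_{ij} = |\alpha_{ij}|/\sqrt{\alpha_{ii}\alpha_{jj}}$ directly from the error-correction data. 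What the paper's projection-based route buys in exchange is the explicit identity $P_i P_j P_i = |\lambda_{ij}|^2 P_i$, which is the form used downstream (in Proposition~\ref{isoqecconverse} and the worked examples), so the two arguments are complementary rather than one strictly dominating the other.
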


\begin{proof}
We have Eqs.~(\ref{qecconds}) satisfied for the $E_i$ and $P_{\mathcal C}$. Let $U_i$ be the partial isometries obtained through the polar decompositions of the operators $E_i P_{\mathcal C}$:
\[
E_i P_{\mathcal C} = U_i |E_i P_{\mathcal C}| = U_i \sqrt{P_{\mathcal C} E_i^* E_i P_{\mathcal C}} = \sqrt{\alpha_{ii}} U_i P_{\mathcal C}.
\]
Note that each $\alpha_{ii}\neq 0$ by non-degeneracy. We can thus reformulate the error correction conditions in terms of the $U_i$ as follows:
\[
P_{\mathcal C} U_i^* U_j P_{\mathcal C} = \frac{1}{\sqrt{\alpha_{ii}}} (P_{\mathcal C} E_i^*)  \frac{1}{\sqrt{\alpha_{jj}}} (E_j P_{\mathcal C})  = \Bigg( \frac{\alpha_{ij}}{\sqrt{\alpha_{ii}\alpha_{jj}}} \Bigg) P_{\mathcal C}.
\]
Also observe that for each $i$, by construction we have $P_i := U_i P_{\mathcal C} U_i^*$ is the projection onto the range $\mathcal V_i$ of $E_i P_{\mathcal C}$ and $P_{\mathcal C}=  P_{\mathcal C} U_i^* U_i P_{\mathcal C}$.

Now for each pair $i,j$, let $\lambda_{ij} = \alpha_{ij}(\sqrt{\alpha_{ii}\alpha_{jj}})^{-1}$ and note that $\overline{\lambda_{ij}} = \lambda_{ji}$. Then we have:
\begin{eqnarray*}
P_i P_j P_i &=&  P_i U_j (P_{\mathcal C} U_j^* U_i P_{\mathcal C}) U_i^* \\
&=& \lambda_{ji} P_i U_j P_{\mathcal C} U_i^* \\
&=&   \lambda_{ji}   U_i (P_{\mathcal C} U_i^* U_j P_{\mathcal C}) U_i^* \\
&=&   \lambda_{ji} \lambda_{ij}  U_i P_{\mathcal C} U_i^* \\
&=&   |\lambda_{ij}|^2   P_i .
\end{eqnarray*}
Similarly, $P_j P_i P_j = |\lambda_{ij}|^2   P_j$. As $\mathcal V_i = P_i\mathcal H$, it follows from Theorem~\ref{isoprop} that the subspaces $\{ \mathcal V_i \}$ are isoclinic.
\end{proof}

We present the following example of a simple error model to illustrate this result.

\begin{example}\label{qeceg}
{\rm
Consider a two-qubit error model describing a bit flip on the first qubit with the probability of some fixed $0 < p < 1$. We can formulate this mathematically by taking $\ket{ij} = \ket{i} \otimes \ket{j}$, $i,j = 0,1$, as a fixed orthonormal basis for $\mathbb{C}^4 = \mathbb{C}^2 \otimes \mathbb{C}^2$. Then if we let $X$ be the Pauli bit flip operator ($X\ket{0}=\ket{1}$, $X\ket{1}=\ket{0}$), we can define $X_1 = X \otimes I_2$ and the error model as a map on two-qubit density operators is given by:
\[
\mathcal E(\rho) = (1-p) \rho + p \, X_1 \rho X_1^*.
\]
Here the error operators are $E_1 = \sqrt{1-p}\, I_4$ and $E_2 = \sqrt{p} X_1$.

Now define two subspaces of $\mathbb{C}^4$ as follows: $\mathcal C_1 = \mathrm{span}\{ \ket{00},\ket{11} \}$ and  $\mathcal C_2 = \mathrm{span}\{ \ket{10},\ket{01} \}$. Let $P_1$, $P_2$ be the corresponding projections. Then $\mathcal C_1$ (and similarly $\mathcal C_2$) is a correctable code for $\mathcal E$, with $\mathcal C_1$, $\mathcal C_2$ the relevant family of subspaces as in the theorem, and in this case the matrix $\alpha = (\alpha_{ij})$ satisfies $\alpha_{11} = 1-p$, $\alpha_{22}= p$, $\alpha_{12} = \alpha_{21} = 0$. So here the canonical angles are both equal to $\theta = \frac{\pi}{2}$ (indeed we have $P_1 P_2 = 0 = P_2 P_1$), and the subspaces are isoclinic.

We can complicate things slightly and obtain more interesting isoclinic subspace structure. Suppose the system is exposed to noise that induces a rotation of angle $0 < \phi < 2\pi$ to the original error model; that is, the original error operators are replaced by
\[
F_1 = (\cos\phi ) E_1 + (\sin\phi) E_2 \quad \mathrm{and} \quad F_2 = (-\sin\phi) E_1 + (\cos\phi) E_2,
\]
which can also be seen through the matrix relation $[F_1 \, F_2] = [E_1 \, E_2] U$ where $U$ is the rotation matrix $U = \left( \begin{array}{cc} \cos\phi & -\sin\phi \\ \sin\phi & \cos\phi  \end{array}  \right).$

The Knill-Laflamme conditions show that correctable codes are the same for error models whose operators are linear combinations of each other, hence $\mathcal C_1$ is correctable for $\{ F_1, F_2\}$. Indeed, here we have, with $c = \cos \phi$, $s = \sin \phi$,
\begin{eqnarray*}
P_{\mathcal C} F_1^* F_1 P_{\mathcal C} &=&  ( c^2 (1-p) + s^2 p ) P_{\mathcal C}   \\
P_{\mathcal C} F_2^* F_2 P_{\mathcal C} &=&  ( s^2 (1-p) + c^2 p ) P_{\mathcal C}
\end{eqnarray*}
and
\[
P_{\mathcal C} F_1^* F_2 P_{\mathcal C} = \big( cs(2p-1) \big) P_{\mathcal C}  = P_{\mathcal C} F_2^* F_1 P_{\mathcal C}.
\]
One can check that the unitary $U$ factors through to give the new error correction coefficient matrix as $\alpha' = U^* \alpha U$. Moreover, the isoclinic angle $\theta$ is computed from the proof of Theorem~\ref{isoqec} in terms of the rotation $\phi$ and probability $p$ as follows:
\[
\theta = \cos^{-1} \Big(   \frac{|cs(2p-1)|^2}{(c^2(1-p) + s^2p)(s^2(1-p) + c^2p)} \Big) .
\]
See the figure below for a 3-space depiction of $\theta\in [0,\frac{\pi}{2}]$ as it depends on $0\leq p \leq 1$ and $0\leq \phi \leq 2 \pi$.}

\begin{figure}[htbp]
    \label{isograph}
    \centering
    \hspace{0cm}
    \includegraphics[scale=0.33]{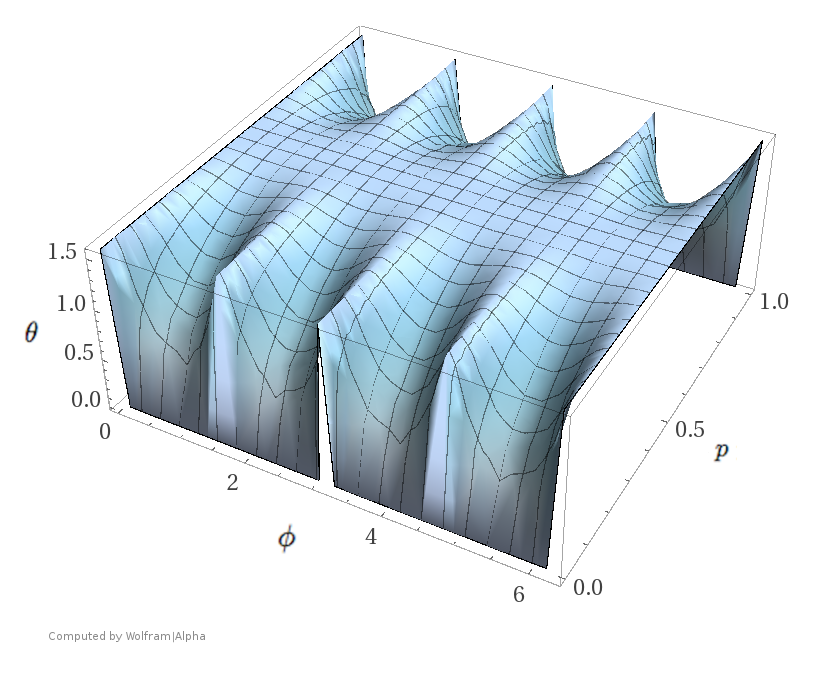}
    \caption{The dependence of $\theta$ on $p$, $\phi$ derived in Example~\ref{qeceg}.}
    \label{fig:lines}
\end{figure}

\end{example}


There is at least a partial converse of the above theorem given as follows.

\begin{proposition}\label{isoqecconverse}
Let $\mathcal{H}$ be a Hilbert space and let $\{ P_1, P_2 \}$ be a pair of projections on $\mathcal H$ associated with two $m$-dimensional isoclinic subspaces. Then each of the subspaces $P_i \mathcal H$ is correctable for the error model $\{ \frac{1}{\sqrt{2}} P_i \}_{i=1}^2$.
\end{proposition}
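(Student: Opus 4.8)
The plan is to verify the Knill--Laflamme conditions \eqref{qecconds} directly for the error operators $E_1 = \frac{1}{\sqrt 2} P_1$ and $E_2 = \frac{1}{\sqrt 2} P_2$, first taking the code to be $\mathcal C = P_1\mathcal H$ (so that $P_{\mathcal C} = P_1$) and then repeating the argument with the roles of $P_1$ and $P_2$ exchanged. Before anything else I would confirm that $\{\frac{1}{\sqrt 2}P_i\}$ is a legitimate trace non-increasing error model by checking $\sum_i E_i^* E_i = \tfrac12(P_1 + P_2) \le I$, which holds because $0 \le P_i \le I$ for each projection.

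The key step is to invoke condition $(iii)$ of Theorem~\ref{isoprop}: since $P_1\mathcal H$ and $P_2\mathcal H$ are isoclinic, there is a scalar $\lambda \ge 0$ with $P_1 P_2 P_1 = \lambda P_1$ and $P_2 P_1 P_2 = \lambda P_2$. Using $E_i^* E_j = \tfrac12 P_i P_j$ together with the idempotence of $P_1$, I would then compute the four products $P_1 E_i^* E_j P_1$ and observe that each collapses to a scalar multiple of $P_{\mathcal C}=P_1$: the diagonal term gives $P_1 E_1^* E_1 P_1 = \tfrac12 P_1$, while the remaining three products each reduce to $\tfrac12 P_1 P_2 P_1 = \tfrac{\lambda}{2} P_1$. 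Hence \eqref{qecconds} is satisfied with coefficient matrix $\alpha = \frac{1}{2}\left(\begin{smallmatrix} 1 & \lambda \\ \lambda & \lambda \end{smallmatrix}\right)$, and the Knill--Laflamme theorem yields correctability of $P_1\mathcal H$. The companion relation $P_2 P_1 P_2 = \lambda P_2$ delivers the identical conclusion for $P_2\mathcal H$ after swapping the two indices throughout.

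There is no substantive obstacle beyond this bookkeeping; the one conceptual point worth flagging is that it is precisely the isoclinic hypothesis that forces the cross term $P_1 P_2 P_1$ to be proportional to the code projection. For a generic pair of projections, $P_1 P_2 P_1 = (P_2P_1)^*(P_2P_1)$ is merely a positive operator on $\mathcal V_1$ and need not be a scalar multiple of $P_1$, so the Knill--Laflamme conditions would fail; isoclinicity, via Theorem~\ref{isoprop}$(iii)$, is exactly what makes them hold. I would also note in passing that the resulting $\alpha$ is automatically positive semidefinite, consistent with the general fact observed after \eqref{qecconds}, which one can confirm directly here since $0 \le \lambda \le 1$ forces $\det\left(\begin{smallmatrix} 1 & \lambda \\ \lambda & \lambda \end{smallmatrix}\right) = \lambda(1-\lambda) \ge 0$.
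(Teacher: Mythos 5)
Your proof is correct and takes essentially the same approach as the paper's: both verify the Knill--Laflamme conditions \eqref{qecconds} directly, using the isoclinic identity $P_iP_jP_i = \lambda P_i$ from Theorem~\ref{isoprop}$(iii)$ to show that each product $P_{\mathcal C} E_i^* E_j P_{\mathcal C}$ collapses to a scalar multiple of the code projection. You are simply more explicit than the paper (writing out all four products, the coefficient matrix $\alpha$, and its positivity), where the paper displays one representative product and notes the rest are similar.
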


\begin{proof}
The projections $P_1$, $P_2$ satisfy the isoclinic identities Eq.~(\ref{isoconds}), with say $P_i P_j P_i = \lambda P_i$ for $i\neq j \in \{ 1,2\}$. Hence we have
\begin{eqnarray*}
P_1 P_1^* P_2 P_1 = P_1 P_2 P_1 = \lambda P_1.
\end{eqnarray*}
Similar identities hold for each product $P_i P_j^* P_k P_i$, $i,j,k = 1,2$, and the result follows from the quantum error correction conditions of Eq.~(\ref{qecconds}).
\end{proof}

Motivated by this result, we finish this section by presenting an example of a pair of isoclinic subspaces that arise in matrix theory and Euclidean geometry, found in Wong's original monograph \cite{wong1977linear}.

\begin{example}
{\rm
Given a $2 \times 2$ complex matrix $M$, one can consider the {\it graph of $M$} which is the subspace of $\mathbb{C}^4$ given by:
\[
\mathcal V_{M} := \Big\{ \begin{pmatrix} x \\ Mx \end{pmatrix} : x \in \mathbb{C}^2 \Big\}.
\]
The orthogonal complement of $\mathcal V_M$ inside $\mathbb{C}^4$ is given as follows:
\[
\mathcal V_{M}^\perp := \Big\{ \begin{pmatrix} -M^* x \\ x \end{pmatrix} : x \in \mathbb{C}^2 \Big\}.
\]
By direct calculation one can show the orthogonal projection of $\mathbb{C}^4$ onto $\mathcal V_M$ is given in block matrix form as (writing $I$ for $I_2$):
\[
P_M : =
    \begin{pmatrix} I \\ M \end{pmatrix} (I + M^* M )^{-1} \begin{pmatrix} I & M^* \end{pmatrix}.
\]

Isoclinic subspaces can be obtained in this context via solutions to certain matrix equations. As in \cite{wong1977linear}, one can solve for $2 \times 2$ matrices $A$ and $B$ and scalar $\lambda$ such that
\[
    \begin{pmatrix} I & A^* \end{pmatrix} \begin{pmatrix} I \\ B \end{pmatrix} (I + B^* B)^{-1} \begin{pmatrix} I & B^* \end{pmatrix}  \begin{pmatrix} I \\ A \end{pmatrix} (I + A^* A)^{-1} = \lambda ;
\]
in other words,
\[
    (I + A^* B) (I + B^* B)^{-1}  (I + B^* A) = \lambda  (I + A^* A) .
\]
With this equation satisfied, we can use the decomposition of $P_A$ and $P_B$ derived above in the general case to conclude that $P_A P_B P_A = \lambda P_A$. A pair of matrices that satisfies this equation, with $A$ and $B$ in either role and $\lambda = \frac12$, is given by:
\[
A = \begin{bmatrix} 1 & 0 \\ 0 & -1\end{bmatrix} \quad \mathrm{and}  \quad B = \begin{bmatrix} \frac{\sqrt{3} + 1}{\sqrt{3} - 1} & 0 \\ 0 & 0\end{bmatrix}.
\]

Thus, it follows from Theorem~\ref{isoprop} that $\mathcal V_A$ and $\mathcal V_B$ are isoclinic at angle $\theta = \frac{\pi}{3}$. Moreover, the quantum error correction conditions can be verified directly in this case as in the proof of Proposition~\ref{isoqecconverse}.
}
\end{example}

\section{Higher Rank Numerical Ranges and Isoclinic Subspaces}

We can also derive a connection with the higher rank numerical range of a matrix or operator. Originally considered in the setting of quantum error correction \cite{choi2005quantum,choi2006higher2}, these numerical ranges have been intensely investigated for over a decade now in matrix theory and beyond \cite{li2007higher,woerdeman2008higher,martinez2008higher,choi2008geometry,li2008canonical,li2009condition,li2011generalized,GLPS}.

Given an operator or matrix $A$ on $\mathbb{C}^n$ and $1\leq k \leq n$, the {\it rank-$k$ numerical range of $A$} is the subset of the complex plane given by:
\[
\Lambda_k(A) = \big\{ \lambda \in \mathbb{C} \, \big| \, PAP = \lambda P \, \mathrm{for\, some \, rank-}k\,\mathrm{projection}\,P\,\mathrm{on}\,\mathbb{C}^n   \big\}.
\]

Here we are interested in the case of higher rank numerical ranges of projections, which can be viewed as a special case of Hermitian operators considered in \cite{choi2006higher2}. If $P$ is a non-zero projection with $\mathrm{rank}(P) = l < n$, then through an application of Theorem~2.4 from \cite{choi2006higher2}, it follows that $\Lambda_k(P) = [0,1]$ whenever $k \leq \min\{ l, n-l \}$.

\begin{proposition}
Let $P$ and $Q$ be nonzero projections on $\mathbb{C}^n$ of the same rank $1 \leq k \leq n$. Then $P \mathbb{C}^n$ and $Q \mathbb{C}^n$ are isoclinic subspaces at angle $\theta$ if and only if $P Q P = \cos (\theta) P$ if and only if $Q P Q = \cos (\theta) Q$.
\end{proposition}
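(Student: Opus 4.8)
The plan is to read this proposition as the two-projection specialization of the equivalence $(i)\Leftrightarrow(iii)$ in Theorem~\ref{isoprop}, with one genuinely new ingredient: when $P$ and $Q$ have the \emph{same} rank, a single isoclinic identity already forces its companion. Since $P\mathbb{C}^n$ and $Q\mathbb{C}^n$ are both $k$-dimensional, Theorem~\ref{isoprop} tells us they are isoclinic at angle $\theta$ if and only if $PQP = \cos(\theta)\,P$ \emph{and} $QPQ = \cos(\theta)\,Q$ hold simultaneously. So the only thing left to establish is that, under the equal-rank hypothesis, each of these two identities implies the other; once that is done, the three-way equivalence in the statement follows immediately from Theorem~\ref{isoprop}, with $\lambda = \cos(\theta)$.

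To carry this out, first I would fix $d\times k$ isometries $V$ and $W$ whose ranges are $P\mathbb{C}^n$ and $Q\mathbb{C}^n$, so that $VV^* = P$, $WW^* = Q$, and $V^*V = W^*W = I_k$, exactly as in the discussion following Theorem~\ref{bjorck}. Set $M := V^* W$, which is a \emph{square} $k\times k$ matrix precisely because the two ranks agree. Conjugating $PQP = \lambda P$ by $V^*$ on the left and $V$ on the right, and using $V^*V = I_k$, collapses it to $MM^* = \lambda I_k$; symmetrically, $QPQ = \lambda Q$ is equivalent to $M^*M = \lambda I_k$. This is the same conjugation computation already used in the proof of $(iii)\Leftrightarrow(ii)$ in Theorem~\ref{isoprop}.

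The crux is then the elementary fact that for a \emph{square} matrix $M$ one has $MM^* = \lambda I_k \iff M^*M = \lambda I_k$ (both say $M/\sqrt{\lambda}$ is unitary when $\lambda>0$, and both say $M=0$ when $\lambda=0$). Hence $PQP=\lambda P \iff MM^*=\lambda I_k \iff M^*M = \lambda I_k \iff QPQ = \lambda Q$, so either identity supplies the pair required by condition $(iii)$ of Theorem~\ref{isoprop}, and each is therefore equivalent to $P\mathbb{C}^n$ and $Q\mathbb{C}^n$ being isoclinic at the angle $\theta$ determined by $\lambda = \cos(\theta)$.

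I expect the main obstacle to be exactly this one-sided-to-two-sided passage, and in particular making clear that it is the squareness of $M$ that drives it: for unequal ranks $M$ is rectangular and $MM^*=\lambda I$ no longer forces $M^*M=\lambda I$, so a single identity would not suffice and the equal-rank hypothesis is essential. Everything else is routine bookkeeping with the isometry relations, so in the writeup I would keep the conjugation steps terse and spend the words isolating where the equal-rank assumption is actually used.
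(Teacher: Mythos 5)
Your proposal is correct, but it proves the key implication by a genuinely different mechanism than the paper. The paper works directly with the projections: assuming $PQP = \cos(\theta)P$ with $\cos(\theta)\neq 0$, it computes $(QPQ)^2 = \cos(\theta)\,QPQ$, so that $\frac{1}{\cos(\theta)}QPQ$ is a projection supported on $Q\mathbb{C}^n$, and then uses a trace argument --- $\Tr(QPQ)=\Tr(PQP)$ together with ``trace equals rank'' for projections --- to force $\frac{1}{\cos(\theta)}QPQ = Q$; the orthogonal case $\theta = \pi/2$ is split off and handled separately. You instead factor through the isometries $V,W$, reduce the two one-sided identities to $MM^* = \lambda I_k$ and $M^*M = \lambda I_k$ for the square matrix $M = V^*W$, and invoke the elementary fact that for square matrices these are equivalent (a square co-isometry is unitary; the $\lambda = 0$ case comes along for free, so no case split is needed). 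Both arguments hinge on the equal-rank hypothesis, but they locate it differently: the paper uses it as $\Tr(P)=\Tr(Q)$, while you use it as squareness of $M$, which you correctly flag as the point where unequal ranks would break the one-sided-to-two-sided passage. Your route has the added benefit of tying the proposition directly to condition $(ii)$ of Theorem~\ref{isoprop} (the scalar-multiple-of-unitary characterization), whereas the paper's trace argument is more intrinsic to the projections and, incidentally, makes visible \emph{why} $QPQ$ must be a scalar multiple of $Q$ rather than merely a multiple of some subprojection. Either proof is acceptable; note only that you assert the conjugation step as an equivalence while writing out just one direction --- the converse direction $MM^*=\lambda I_k \Rightarrow PQP = V(MM^*)V^* = \lambda P$ is a one-line computation, but it is used in your chain of iffs and should be stated.
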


\begin{proof}
Firstly, the case that $\theta =  \frac{\pi}{2}$ and $\cos(\theta) = 0$ corresponds to orthogonality of the two subspaces and $PQ=0=QP$. So let us assume $\cos(\theta)\neq 0$ for the rest of the proof.

Suppose $P Q P = \cos (\theta) P$, and so
$$
(QPQ)(QPQ) = QP(QQ)PQ = Q(PQP)Q = \cos (\theta) QPQ.
$$
Next, dividing both sides by $\cos^{2} (\theta)$ we get,
$$
\frac{1}{\cos^2 (\theta)} (QPQ)(QPQ) = \frac{1}{\cos (\theta)} QPQ.
$$
Hence $\cos^{-1}(\theta) QPQ$ is a projection that is evidently supported on $Q \mathbb{C}^n$. However, we also have, with $\Tr(\cdot)$ the trace functional,
\begin{eqnarray*}
\Tr(\frac{1}{\cos (\theta)} QPQ) = \frac{1}{\cos (\theta)} \Tr (QPQ)
&=& \frac{1}{\cos (\theta)} \Tr (QP)  \\
&=& \frac{1}{\cos (\theta)} \Tr(PQP)
= \Tr(P) = \Tr(Q).
\end{eqnarray*}
As the rank of a projection is equal to its trace, it follows that in fact $QPQ = \cos(\theta) Q$.

Thus we have shown that $PQP = \cos(\theta) P$ if and only if $QPQ = \cos(\theta) Q$. The equivalence of these conditions with $P\mathbb{C}^n$ and $Q\mathbb{C}^n$ being isoclinic follows from Theorem~\ref{isoprop}.
\end{proof}

\begin{remark}
{\rm
In particular, for the projections $P$, $Q$ corresponding to a pair of isoclinic subspaces, each of the projections is encoded into the structure of the other projection's higher rank numerical ranges in the sense that: $P$ (respectively $Q$) is a projection corresponding to $\cos (\theta) \in \lambda_k (Q)$ (respectively $\Lambda_k (P)$).
}
\end{remark}

\vspace{0.1in}

{\noindent}{\it Acknowledgements.} D.W.K. was partly supported by NSERC and a University Research Chair at Guelph. R.P. was partly supported by NSERC.

\bibliography{bib-2}
\bibliographystyle{amsplain}

\end{document}